\newtheorem{thm}{Theorem}[section]
\newtheorem*{thm*}{Theorem}
\newtheorem{lemma}[thm]{Lemma}
\theoremstyle{definition}
\newtheorem{remark}[thm]{Remark}
\numberwithin{equation}{section}
\title{Rearrangements and Leibniz-type rules of mean oscillations}
\author{Zolt\'an L\'eka}
\address{Royal Holloway, University of London \\ Egham Hill \\ Egham \\ Surrey \\ TW20 0EX \\ United Kingdom}
\email{zoltan.leka@rhul.ac.uk}
\thanks{This study was supported by the Marie Curie IF Fellowship, Project 'Moments', Grant no. 653943 and by the Hungarian Scientific Research Fund (OTKA) grant
no. K104206.}
\subjclass{Primary 26D15, 28A25; Secondary 39B62 }
\keywords{Leibniz inequality; Leibniz seminorm; rearrangement}
\begin{document}

\begin{abstract}
      We shall prove a rearrangement inequality in probability measure spaces in order to obtain 
      sharp Leibniz-type rules of mean oscillations in $L^p$-spaces and rearrangement invariant Banach function spaces.      
\end{abstract}

\maketitle

%
%
%
%
    
   \section{Introduction}
   
       Rearrangements and rearrangement inequalities are powerful tools in functional analysis.  
       One can find applications, for instance, in symmetry problems in the calculus of variations, 
       interpolation theory or matrix analysis as well, see e.g. \cite{LL}, \cite{BS} and \cite{Bhatia}.
       
       Our aim is to provide a rearrangement inequality, which seems to have been unnoticed so far,
       in the style of the classical Hardy--Littlewood inequality. We shall apply this result to offer
       a simple new proof of sharp Leibniz-type rules of mean oscillations (or dispersions around expected values)
       in $L^p$-spaces and an extension to rearrangement invariant function spaces. 
       
       In general, Leibniz-type rules quantify the seminorms of products in function or operator algebras in terms of 
       the (semi)norms of their factors. To be a bit more precise, one may consider inequalities 
        $$ \|fg\|_Z \lesssim \|g\|_{X_1} \|f\|_{Y_1}  + \|g\|_{X_2} \|f\|_{Y_2}, $$
       for all $f,g$ in the space $Z,$ with appropriate (semi)norms $X_1, X_2, Y_1, Y_2.$ 
       However, to determine the sharp constant of the right-hand side, or to prove that it is finite, depending
       on the spaces $X_1, X_2, Y_1, Y_2$, can be a challenging problem. 
       
       Sharp Leibniz inequalities or Leibniz seminorms have appeared in M. Rieffel's fundamental 
       studies of non-commutative (quantum) metric spaces, see e.g. \cite{R0}, \cite{R1}, \cite{R2}, \cite{RL}.
       Briefly, we say that a seminorm $L$ is Leibniz on a unital algebra $(\mathcal{A}, \|\cdot\|)$ if it vanishes on the unit element 
       of $\mathcal{A}$ and $L(ab) \leq \|b\| L(a) + \|a\| L(b)$ is satisfied for all $a, b \in \mathcal{A}.$
       For instance, if $(X, d)$ denote a compact metric space,
       $$ \mbox{Lip}(f) = \sup \left\{ {|f(x) - f(y)| \over d(x,y)} \colon x, y \in X, x \neq y  \right\},$$
       the Lipschitz constant of any continuous function $f \colon X \rightarrow \mathbb{C},$ 
       defines such a seminorm on the algebra $C(X)$ endowed with the usual sup norm
       (the case $\mbox{Lip}(f)=\infty$ may happen).
       Interestingly, one can simply recover the underlying metric $d$ on $X$ through Lip.
       In general, metric data of non-commutative compact $C^*$-metric spaces can be encoded 
       by Leibniz seminorms satisfying  some crucial analytic properties as well.
       We recall that very natural sources of such seminorms are given by first-order differential calculi, or inner derivations while others are
       arising from ergodic actions of compact groups, see e.g. \cite[Section 2]{RL}.
       A simple (but non-trivial) example is the standard deviation defined in ordinary on non-commutative probability spaces \cite{R2}.    
       From a broader perspective, Dirichlet forms on real $L^2$-spaces define Leibniz seminorms as well.
       Indeed, if $\mathcal{E}$ is a Dirichlet form over the domain $D(\mathcal{E}),$ one has 
        $$ \sqrt{\mathcal{E}(fg,fg)} \leq \|f\|_\infty \sqrt{\mathcal{E}(g,g)} + \|g\|_\infty \sqrt{\mathcal{E}(f,f)}$$
       for all $f, g \in L^\infty \cap D(\mathcal{E}),$ see \cite[Theorem 1.4.2]{Fuk}, \cite[Corollary 3.3.2]{BH}.
       In fact, any Dirichlet form can be represented as a quadratic form associated to a
       closable derivation, which can serve as a direct link to Leibniz-type inequalities, see \cite{CS} and the references therein.
      
       On the other hand, we have to admit that the Kato--Ponce inequalities, concerning the fractional Laplacian, are likely to be the most well-known
       Leibniz-type inequalities. We just recall the result in the following form:            
       let $(-\Delta)^\alpha$ be the fractional Laplacian defined as the Fourier multiplier
       $$ \widehat{(- \Delta)^\alpha f}(\xi) = |\xi|^{2\alpha} \hat{f}(\xi), \quad \xi \in \mathbb{R}^n, $$
       for any $f$ in the Schwartz space $\mathcal{S}(\mathbb{R}^n).$
       If $1 < r, p_1,q_1,p_2,q_2 < \infty$ such that ${1 \over r} = {1 \over p_1} + {1 \over q_1} = {1 \over p_2} + {1 \over q_2}$
       and $0 < \alpha \leq 1,$ one has, for all $f,g \in \mathcal{S}(\mathbb{R}^n),$
        $$ \|(-\Delta)^\alpha (fg)\|_r \leq C( \|g\|_{p_1} \|(-\Delta)^\alpha f\|_{q_1} 
        + \|f\|_{p_2} \|(-\Delta)^\alpha g\|_{q_2}),$$
       where $C = C_{n,\alpha, p_1, q_1, p_2, q_2, r} > 0$ is a constant depending
       only on $(n,\alpha,  p_1, q_1, p_2, q_2, r).$ Nowadays, the Kato--Ponce inequalities
       have been extensively studied and have a large literature. We refer the reader
       to \cite{G} and \cite{MS}, for instance.
   
       In this paper, we shall provide Leibniz-type rules of mean oscillations via rearrangement inequalities.
       We would like to convince the reader that rearrangements of functions naturally appear when we discuss 
       these inequalities. The present paper is a continuation of the recent ones \cite{BeL}, \cite{LJmaa}, \cite{LMia}
       and offers a different view on our earlier results
       with an extension to rearrangement invariant function spaces. 
       
       Given a probability space $(\Omega, \mathcal{F}, \mu),$
       suppose $f, g \in L^\infty(\Omega, \mu)$ and $h \in L^1(\Omega, \mu)$ are real-valued $\mu$-measurable functions.
       First, we shall prove a rearrangement inequality in Theorem 3.2 below. There the mean-zero condition 
        $$\int_\Omega g \: d\mu = 0$$ turns out to be crucial to get 
        $$ \iint\limits_{\Omega \times \Omega} (f(x)+f(y))(g(x)-g(y))h(y)  \: d\mu(x)d\mu(y)\leq  2 \int_0^\infty f^*(y) g^*(y) h^*(y) \: dy$$
       with $f^*, g^*$ and $h^*,$ the decreasing rearrangement of $f,g$ and $h,$ respectively.
       Moreover, we shall provide an instant application 
       by proving a Leibniz-type result of mean oscillations that first appeared in \cite{LMia}.
       Indeed, setting $\displaystyle f_\Omega := \int_\Omega f \: d \mu,$ for any real $f,g \in L^\infty(\Omega, \mu),$ one has
       $$ \|fg - (fg)_\Omega \|_r \leq \|f\|_{p_1} \|g - g_\Omega \|_{q_1} + \|g\|_{p_2} \|f - f_\Omega \|_{q_2}, $$
       where $1 \leq r,p_1,p_2,q_1,q_2 \leq \infty$ and ${1 \over r} = {1 \over p_1} + {1 \over q_1} = {1 \over p_2} + {1 \over q_2}.$
       Additionally, similar statements in rearrangement invariant Banach function spaces shall be presented.
       In fact, Theorem 4.3 asserts for real-valued $f,g \in X$ that 
             $$ \|fg - (fg)_\Omega \|_X \leq \|f\|_{\infty} \|g - g_\Omega \|_X + \|g\|_{\infty} \|f - f_\Omega \|_X,$$
       where $X$ is a rearrangement invariant norm over a Banach function space. 
       
   \section{Preliminaries}
         
    Let $(\Omega, \mathcal{F}, \mu)$ be a measure space.
    Let $\mathcal{L}_0$ denote the algebra of $\mu$-measurable real-valued functions over $\Omega.$ We recall that a linear space $X \subseteq\mathcal{L}_0$
    is a {\it Banach function space} if it is endowed with a norm $\|\cdot\|_X$ that satisfies the ideal property: if $f \in\mathcal{L}_0$
    and $g \in X$ and $|f| \leq |g|$ $\mu$-a.e. then $f \in X$ and $\|f\|_X \leq \|g\|_X$ hold. 
    From here on, we shall suppose that $X$ possesses the Fatou
    property; that is, if $0 \leq f_n \uparrow f$ $\mu$-a.e. and the increasing sequence $\{f_n\}_n$ is norm bounded,
    then $f \in X$ and $\|f_n\|_X \rightarrow \|f\|_X.$ 
    
    We say that two functions $f$ and $g$ are equimeasurable if $|f|$ and $|g|$ have the same distribution
    functions. Then the Banach function space $X$ is {\it rearrangement invariant}
    if for all $f \in X,$  $g \in\mathcal{L}_0,$ where $f$ and $g$ are equimeasurable, we have $g \in X$ and $\|f\|_X = \|g\|_X.$
    
    We notice that if $\mu(\Omega) = 1$ and $\|1\|_X = 1,$ then we have the isometric embeddings
    $L^\infty(\Omega, \mu) \hookrightarrow X \hookrightarrow L^1(\Omega, \mu).$
    
    Well-known examples of rearrangement invariant spaces are 
    the $L^p$-spaces and their certain generalizations, the Lorentz and Orlicz spaces, and the Marczinkiewic spaces.
    
    The {\it decreasing rearrangement} of a $\mu$-measurable $f$ is a nonnegative, decreasing, right-continuous function on $[0, \infty)$ 
    with the same distribution as that of $f.$ We shall denote this function by $f^*.$
    Decreasing rearrangement is not a linear
    but sublinear operator. A simple but useful property of the $*$-operation is the monotone convergence property; i.e. if $|f_n| \uparrow |f|$ $\mu$-a.e. then (2.0) $f^*_n \uparrow f^*$ holds.
    Later, we shall use the important fact that decreasing rearrangements preserve $L^p$ norms. In fact, for all
    $1 \leq p < \infty,$ we have $(|f|^p)^* = (f^*)^p$ hence $\|f\|_p = \|f^*\|_p$ follows,
    and $f^*(0) = \|f\|_\infty.$ Interestingly, the decreasing
    rearrangement is a non-expansive map on $L^p$ spaces, i.e. $\|f^* - g^*\|_p \leq \|f - g\|_p$ \cite{CZR}.

    Let $X$ denote a rearrangement invariant space over the measure space $(\Omega, \mathcal{F}, \mu),$
    which is non-atomic or completely atomic, with all atoms having equal measure.
    The associated space $X'$ (or 
    sometimes called K\"othe dual)
    is the collection of $\mu$-measurable functions $f \in \mathcal{L}_0$ such that 
    $$ \|f\|_{X'} =  \sup \left\{ \int_\Omega fh \: d\mu \colon \|h\|_{X} \leq 1 \right\}$$ is finite.
    In general, the associate space $X'$ is not the Banach space dual of $X.$ But one can find an isometric isomorphism
    from $X'$ onto a closed subspace of the topological dual $X^*,$ which contains sufficiently many linear functionals to get the norm 
    of each element in $X.$
    
    We recall that H\"older's inequality says that if $f$ belongs to $X$ and $g$ belongs to the associate space $X',$ then
     \begin{align}
       \int_\Omega |fg| \: d\mu \leq \int_0^\infty f^*(x) g^*(x) \: dx \leq \|f\|_X \|g\|_{X'},
     \end{align}   
    where the first inequality may be called the Hardy--Littlewood inequality, see \cite{HLP}.
    For general properties of Banach function spaces and proofs of the previous statements,
    we refer the reader to \cite{BS}. 
    
    Let us now choose a non-negative function 
    $f \colon \Omega \rightarrow \mathbb{R}_+$ such that 
    each of its level sets has finite measure:
     $$ \mu(\{ s \colon f(s) > t\}) < \infty.$$
    Then the {\it layer cake} (or {\it wedding cake}) {\it representation} asserts that $f$ can be written as the integral of the characteristic 
    functions of its level sets; that is, 
     $$ f(x) = \int_0^\infty {\bf 1}_{\{s \colon f(s) > t\}}(x) \: dt,$$
    see \cite{LL}, \cite{Simon}.
    Now it is simple to see that if $f$ is a real-valued, then
    $f$ is the integral of differences of characteristic
    functions having disjoint support sets, determined by the positive and negative parts of $f,$ respectively.
    We notice that 
     $$ {\bf 1}_{\{f > t\}}^*= {\bf 1}_{\{f^* > t\}}$$
     for non-negative $f,$ and  
     \begin{align}
       ({\bf 1}_{\{f_+ > t\}}-{\bf 1}_{\{f_- > t\}})^*= {\bf 1}^*_{\{f_+ > t\} \cup \{f_- > t\}} =  {\bf 1}_{\{f^* > t\}}, 
     \end{align}
    where $f_+$ and $f_-$ stand for the positive and negative part of $f.$
     
    The layer cake representation is an efficient tool to prove rearrangement inequalities as the Hardy--Littlewood inequality
    or its generalization, the Riesz's rearrangement inequality, for instance; see e.g. \cite{B}, \cite{HLP}, \cite{LL}, \cite{Simon}.
    While these inequalities are dealing with non-negative functions,
    we shall need some extra care in the next section because of working with real-valued functions.
     
    \section{Main result: a rearrangement inequality}
        
    For simplicity, let us use the notations $a \vee b = \max(a,b)$ and $a \wedge b = \min(a,b)$ whenever
    $a$ and $b$ are reals. 
        
     Let $A \subseteq \Omega$ be a $\mu$-measurable set. We shall use the notation $A^*$ for the support set of
     $\chi_A^*.$ Clearly, $A^*$ is the interval $[0, \mu(A))$ in $\mathbb{R}_+.$ Let $A^c$ denote the complementer set of $A$ in $\Omega.$
     From here on, let $|\cdot|$ stand for the Lebesgue measure of any measurable set of the real line.
     For any $f \colon \Omega \rightarrow \mathbb{R},$ the symmetric (grid-like) function ${\bf 1}_{\mathcal{F}} \colon \Omega \times \Omega \rightarrow \mathbb{R}$  is defined by
     $${\bf 1}_{\mathcal{F}} \colon (x,y) \mapsto {\bf 1}_f(x) \vee {\bf 1}_f(y),$$
     where $x,y \in \Omega$ and ${\bf 1}_f$ denotes the indicator function of supp$(f),$ the support set of $f.$
     Throughout the paper, by the support set of $f$
     we mean the set $\{ f \neq 0\} := \{ x \in \Omega \colon f(x) \neq 0\}.$
       
        For $A, B \subseteq \Omega,$ let us use the notation
               $$ I_{A,B}(f,g,h) :=  \int_A \int_B (f(x)+f(y))(g(x)-g(y))h(y) \: d\mu(x) d\mu(y). $$        
        We start with the following lemma in order to prove Theorem 3.2.        
     
    \begin{lemma} 
         Let $f, g$ and $h$ be real-valued measurable functions over $(\Omega, \mathcal{F}, \mu)$
         such that $g \in L^1(\Omega, \mu)$ and $$\int_\Omega g \: d\mu = 0.$$ 
         Suppose {\rm ess ran} $f \subseteq \{ -1,0,1\},$ and $|h| \leq 1$ $\mu$-a.e. hold. We have
           $$I_{G^c,G}(f,g, h) +  I_{G,G^c}(f,g,h) \leq
              2 \int_{G} \int_{G^c}   |g(y)| {\bf 1}_{\mathcal{F}}(x,y)  {\bf 1}_{\mathcal{H}}(x,y) \: d\mu(x) d\mu(y),$$
         where $G$ is the support of $g.$
    \end{lemma}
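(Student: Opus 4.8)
The plan is to collapse the two double integrals on the left-hand side into a single integral over $G \times G^{c}$, and then to bound its integrand pointwise after a mean-zero correction. First I would use that $g$ vanishes off its support $G$: for $y \in G^{c}$ one has $g(y)=0$, so $g(x)-g(y)=g(x)$ in $I_{G^{c},G}$, while for $x \in G^{c}$ one has $g(x)=0$, so $g(x)-g(y)=-g(y)$ in $I_{G,G^{c}}$. Renaming the integration variables so that $s$ always denotes the point of $G$ and $t$ the point of $G^{c}$ (legitimate by Fubini's theorem, the integrand being bounded by $4|g(s)|$, which is integrable over $G\times G^{c}$), the two contributions become integrals over the same domain and add up to
\[
 J := I_{G^{c},G}+I_{G,G^{c}} = \int_{G}\int_{G^{c}} (f(s)+f(t))\,g(s)\,(h(t)-h(s)) \: d\mu(t)\,d\mu(s).
\]

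The decisive step is to invoke $\int_{\Omega} g\,d\mu = \int_{G} g\,d\mu = 0$. Since $t \mapsto 2f(t)h(t)$ depends on $t$ alone, Fubini gives $\int_{G}\int_{G^{c}} g(s)\,2f(t)h(t)\,d\mu(t)\,d\mu(s) = 2\big(\int_{G^{c}} fh\,d\mu\big)\big(\int_{G} g\,d\mu\big)=0$, so I may subtract this quantity from the integrand of $J$ without changing its value. Setting $Q(s,t) := (f(s)+f(t))(h(t)-h(s)) - 2f(t)h(t)$ and regrouping, one uncovers the antisymmetric form $Q(s,t) = (f(s)-f(t))\,h(t) - (f(s)+f(t))\,h(s)$. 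Then $J = \int_{G}\int_{G^{c}} g(s)\,Q(s,t)\,d\mu(t)\,d\mu(s) \le \int_{G}\int_{G^{c}} |g(s)|\,|Q(s,t)|\,d\mu(t)\,d\mu(s)$, and everything reduces to the pointwise estimate $|Q(s,t)| \le 2\,{\bf 1}_{\mathcal F}(s,t)\,{\bf 1}_{\mathcal H}(s,t)$.

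For the pointwise bound I would apply the triangle inequality $|Q(s,t)| \le |f(s)-f(t)|\,|h(t)| + |f(s)+f(t)|\,|h(s)|$ and use $|h|\le 1$ to bound $|h(s)| \le {\bf 1}_{h}(s) \le {\bf 1}_{\mathcal H}(s,t)$ and likewise $|h(t)| \le {\bf 1}_{\mathcal H}(s,t)$. The sharp constant $2$ (rather than the naive $4$) then emerges from the elementary identity $|a-b|+|a+b| = 2\max(|a|,|b|)$: applied with $a=f(s)$, $b=f(t)$, and using ess ran $f \subseteq \{-1,0,1\}$ so that $\max(|f(s)|,|f(t)|) = {\bf 1}_{f}(s)\vee{\bf 1}_{f}(t)={\bf 1}_{\mathcal F}(s,t)$, it gives $|f(s)-f(t)|+|f(s)+f(t)| = 2\,{\bf 1}_{\mathcal F}(s,t)$, whence $|Q(s,t)| \le 2\,{\bf 1}_{\mathcal F}(s,t)\,{\bf 1}_{\mathcal H}(s,t)$. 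Integrating over $G \times G^{c}$ yields the claim. I expect the main obstacle to be precisely this constant: a naive pointwise bound of the original integrand $(f(s)+f(t))(h(t)-h(s))$ only gives the factor $4$, since $|h(t)-h(s)|$ can reach $2$ while the grid function ${\bf 1}_{\mathcal H}$ allows only $1$. It is the mean-zero correction by $2f(t)h(t)$ that resolves this, splitting the oscillation into the two pieces of $Q$ governed by $f(s)-f(t)$ and $f(s)+f(t)$, which (for $f$ valued in $\{-1,0,1\}$) cannot be simultaneously large.
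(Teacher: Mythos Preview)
Your argument is correct. The reduction to the single integral $J=\int_{G}\int_{G^{c}} (f(s)+f(t))\,g(s)\,(h(t)-h(s))\,d\mu(t)\,d\mu(s)$ is sound, the mean-zero correction $\int_{G}\int_{G^{c}} 2f(t)h(t)\,g(s)\,d\mu(t)\,d\mu(s)=0$ is justified by $\int_{G}g=0$, the algebra giving $Q(s,t)=(f(s)-f(t))h(t)-(f(s)+f(t))h(s)$ is accurate, and the pointwise bound via $|a-b|+|a+b|=2\max(|a|,|b|)$ together with $\max(|f(s)|,|f(t)|)={\bf 1}_{\mathcal F}(s,t)$ for $f$ valued in $\{-1,0,1\}$ delivers exactly the right-hand side (the grid functions being symmetric in their two arguments).

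Your route differs from the paper's. The paper treats $I_{G^{c},G}$ first, splitting the outer domain $G^{c}$ according to whether $f(y)\neq 0$; on $\{f(y)\neq 0\}$ it exploits that $f(x)+f(y)$ has constant sign in $x$ to subtract $2\,\mathrm{sign}\,f(y)\cdot g(x)h(y)$ (which integrates to zero over $x\in G$) and bound by $(2-|f(x)+f(y)|)|g(x)||h(y)|$, then relabels and recombines with $I_{G,G^{c}}$ through further case bookkeeping. Your approach folds the two integrals together from the start and performs a single mean-zero correction by $2f(t)h(t)$; the sharp constant then falls out of the identity $|a-b|+|a+b|=2\max(|a|,|b|)$ without any case splitting on the supports of $f$. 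This is more streamlined and makes transparent why the $\{-1,0,1\}$ hypothesis on $f$ is exactly what is needed: it is what turns $\max(|f(s)|,|f(t)|)$ into the indicator ${\bf 1}_{\mathcal F}(s,t)$.
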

    
     \begin{proof}
        First, notice that the sum $f(x)+f(y)$ has constant sign $\mu$-a.e. if $f(y) \neq 0,$ $y$ is fixed, and $x \in \Omega.$
        Since $\int_G 2g \: d\mu = 0,$ we get
              \begin{align*} 
                 I_{G^c,G}&(f,g, h) \\
                 &\leq  \int_{{G^c} \cap \{f \neq 0\}}  \left| \int_{G}  (2-|f(x)+f(y)|) (g(x)-g(y)) h(y) \: d\mu(x) \right| \: d\mu(y) \\
                  & \quad +  \int_{{G^c} \cap \{f = 0\}} \int_{{G} \cap \{f \neq 0\}}   |g(x)| |h(y)| \: d\mu(x) d\mu(y) \\
                 &\leq 
                \int_{G} \int_{{G^c} \cap \{f \neq 0\}} (2-|f(x)+f(y)|) |g(y)| {\bf 1}_{\mathcal{H}}(x,y) \: d\mu(x) d\mu(y) \\
                & \qquad +  \int_{{G} \cap \{f \neq 0\}} \int_{{G^c} \cap \{f = 0\}}   |g(y)| {\bf 1}_{\mathcal{H}}(x,y) \: d\mu(x) d\mu(y),
            \end{align*}
            where we interchanged the order of integration and relabeled the variables in the last step.
            Thus
             \begin{align*}
                I_{G^c,G}(f,g, h) +&  I_{G,G^c}(f,g,h) \\
                \leq& \:  2\int_{G} \int_{{G^c} \cap \{f \neq 0\}} |g(y)| {\bf 1}_{\mathcal{H}}(x,y) \: d\mu(x) \: d\mu(y) \\
                 & \:+ 2 \int_{{G} \cap \{f \neq 0\}} \int_{{G^c} \cap \{f = 0\}}   |g(y)| {\bf 1}_{\mathcal{H}}(x,y) \: d\mu(x)  \: d\mu(y) \\
                 =& \:2 \int_{G} \int_{G^c}   |g(y)| {\bf 1}_{\mathcal{F}}(x,y)  {\bf 1}_{\mathcal{H}}(x,y) \: d\mu(x) d\mu(y), 
              \end{align*}   
         which is what we intended to have.
     \end{proof}
     
    Now we can prove the main theorem of the section.
    
    \begin{thm}
            Let  $(\Omega, \mathcal{F}, \mu)$ be a probability space. Let $f, g \in L^\infty(\Omega, \mu)$ 
            and $h \in L^1(\Omega, \mu)$ be real-valued measurable functions.
            Suppose $$\int_\Omega g \: d\mu = 0.$$ Then
               $$ \iint\limits_{\Omega \times \Omega} (f(x)+f(y))(g(x)-g(y))h(y)  \: d\mu(x) \: d\mu(y)\leq  2 \int_0^\infty f^*(y) g^*(y) h^*(y) \: dy .  $$                  
    \end{thm}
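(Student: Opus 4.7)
The plan is to reduce via a layer cake decomposition to the setting of Lemma 3.1 and then to assemble the per-layer estimates into a Hardy--Littlewood bound. First I would normalize, by homogeneity, so that $\|f\|_\infty = \|h\|_\infty = 1$, and write
\[
f(x) = \int_0^1 f_s(x)\,ds, \qquad h(y) = \int_0^1 h_t(y)\,dt,
\]
with $f_s := \mathbf{1}_{\{f_+>s\}} - \mathbf{1}_{\{f_->s\}}$ and $h_t$ defined analogously; both take values in $\{-1,0,1\}$ $\mu$-a.e. Trilinearity of the left-hand side in $f$ and $h$ together with Fubini yield
\[
I(f,g,h) = \int_0^1\!\int_0^1 I(f_s,g,h_t)\,ds\,dt,
\]
where $I(\cdot,\cdot,\cdot)$ abbreviates the double integral on $\Omega\times\Omega$.

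For each pair $(s,t)$ I would split $\Omega\times\Omega$ into four pieces via $G=\operatorname{supp}(g)$. The piece $G^c\times G^c$ vanishes since $g\equiv 0$ there; Lemma 3.1 directly controls the two cross pieces; and the diagonal piece $I_{G,G}(f_s,g,h_t)$ would be handled by re-running the sign-constancy argument of Lemma 3.1 on $G\times G$, distinguishing $y\in G\cap\{f_s\ne 0\}$ from $y\in G\cap\{f_s=0\}$ and invoking $\int_G g\,d\mu=0$. The target at this step is the sharp per-layer estimate
\[
I(f_s,g,h_t)\;\le\;2\int_0^{\min(\mu\{|f|>s\},\,\mu\{|h|>t\})} g^*(y)\,dy.
\]

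Finally, I would integrate over $(s,t)\in[0,1]^2$ and use the layer cake identity $f^*(y)=\int_0^\infty \mathbf{1}_{\{\mu\{|f|>s\}>y\}}\,ds$ together with the analogous identity for $|h|$. Fubini then shows that the right-hand side of the per-layer estimate integrates exactly to $\int_0^\infty f^*(y)\,g^*(y)\,h^*(y)\,dy$, which upon multiplication by $2$ yields the claimed inequality.

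The hard part will be delivering the per-layer estimate tightly. Applying Hardy--Littlewood term by term to the identity $I(f_s,g,h_t)=\overline{h_t}\int f_sg-\overline{f_s}\int gh_t-\int f_sgh_t$ (which follows from $\int g\,d\mu=0$) falls short of the target by a positive term, so careful cancellation between the cross-piece and diagonal-piece bounds---orchestrated through the sign-constancy trick and the mean-zero condition---will be essential to collapse the resulting expression into the sharp Hardy--Littlewood form $2\int_0^{\min} g^*(y)\,dy$.
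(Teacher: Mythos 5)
Your outer framework is exactly the paper's opening move: the layer-cake reduction of $f$ and $h$ to $\{-1,0,1\}$-valued functions, the Fubini reassembly via $f^*(y)=\int_0^\infty \mathbf{1}_{\{\mu\{|f|>s\}>y\}}\,ds$, and the identification of the correct per-layer target $I(f_s,g,h_t)\le 2\int_0^{\mu\{|f|>s\}\wedge\mu\{|h|>t\}}g^*(y)\,dy$ are all sound and match the paper. But the per-layer estimate \emph{is} the theorem (specialized to indicator-type $f$ and $h$), and you explicitly leave its proof as "the hard part" with only the hope that "careful cancellation \ldots will be essential." That is a plan, not a proof, and the plan as stated is missing the two ideas that make the paper's argument work.

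First, the paper does not prove the estimate for a general mean-zero $g$ directly. It reduces to the two-block case $g=a\mathbf{1}_A-b\mathbf{1}_B$ with $a\mu(A)=b\mu(B)$, by decomposing a general mean-zero simple $g$ into a sum $\sum_i g_i$ of such two-block pieces whose supports are \emph{nested}; nestedness is what forces the identity $g^*=\sum_i g_i^*$ (decreasing rearrangement is only sublinear in general, so an arbitrary decomposition would lose the inequality in the wrong direction), and then linearity of the left-hand side in $g$ finishes the reduction. Your sketch never decomposes $g$ at all, and without this step the "cancellation" you allude to has no finite combinatorial structure to act on. Second, even in the two-block case the conclusion is not a single cancellation but an explicit computation: the cross terms (via Lemma 3.1) and the diagonal term are summed into a quantity $\Psi_1+\Psi_2$ expressed through the measures $\mu(A\cap F)$, $\mu(B\cap F)$, $\mu(G^c\cap F)$, and one must establish \emph{three} separate upper bounds --- $2a\mu(F)$, $2a\mu(A)+2b\mu(B)$, and $2a\mu(A)+2b(\mu(F)-\mu(A))$ --- whose minimum is then recognized as $2a|A^*\cap F^*|+2b|(F^*\setminus A^*)^*\cap B^*|=2\int_0^\infty f^*g^*h^*$, with a further case split according to whether $\mu(\mathrm{supp}\,f)\le\mu(\mathrm{supp}\,h)$ or the reverse. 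None of this is present or foreshadowed in your proposal, so as written there is a genuine gap at the heart of the argument.
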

    
     \begin{proof}
       From the layer cake representation, the interchange of order of integrations and (2.2),
       we may assume that the essential ranges of $f$ and $h$ contain only the values $-1, 0, 1.$
       Furthermore, one can find a sequence of simple functions $\{g_n\}_n$ such that $g_n \rightarrow g$ $\mu$-a.e.,
       $|g_n| \uparrow |g|$ $\mu$-a.e. and $\int g_n = 0.$
       The dominated convergence theorem and (2.0) guarantee that we may assume that $g$ is a simple function.
       
       \smallskip
       
       \noindent {\it Step 1:} Let $A$ and $B$ be disjoint measurable subsets of $\Omega.$ Our first step is to tackle the case when $g$ can be written as
       \begin{equation}
        g = a{\bf 1}_A - b {\bf 1}_B,  
       \end{equation}
       where $a\mu(A) = b \mu(B)$ and $0 < b \leq a.$ Let $G = A \cup B$
       be the support of $g,$ and $G^c$ denote its complement in $\Omega,$ as usual.
       Let us split the integral into three parts:
       \begin{align}  
              I_{\Omega, \Omega}(f,g, h) =  I_{G,G^c}(f,g, h) + I_{G^c,G}(f,g, h) + I_{G,G}(f,g, h).
            \end{align}

       {\it Case 1.} Suppose ${\bf 1}^*_f \leq {\bf 1}^*_h;$ that is, $\mu(\{|f| \neq 0\}) \leq \mu(\{|h| \neq 0\}).$ 
   
               From the simple decomposition $G = (G \cap \{f = 0 \}) \cup (G \cap \{f \neq 0 \})$ 
               and the symmetry in $x$ and $y,$ we may infer that
           \begin{align*}
              I_{G,G}(f,g, h) &\leq \iint\limits_{G \cap \{f \neq 0\} \times G \cap \{f \neq 0\}} 2|g(x)-g(y)| {\bf 1}_{\mathcal{H}}(x,y)  \: d\mu(x) d\mu(y) \\
                &\qquad \qquad +  \int_{G \cap \{f \neq 0\}}  \int_{G \cap \{f = 0\}} |g(x)-g(y)| {\bf 1}_{\mathcal{H}}(x,y) \: d\mu(x) d\mu(y)  \\  
                &\qquad \qquad +   \int_{G \cap \{f = 0\}} \int_{G \cap \{f \neq 0\}} |g(x)-g(y)| {\bf 1}_{\mathcal{H}}(x,y) \: d\mu(x) d\mu(y)  \\  
                &= 2 \int_{G \cap \{f \neq 0\}} \int_{G} |g(x)-g(y)| {\bf 1}_{\mathcal{F}}(x,y) {\bf 1}_{\mathcal{H}}(x,y) \: d\mu(x) d\mu(y) \\
                &\leq 2 \int_{G \cap \{f \neq 0\}} \int_{G} |g(x)-g(y)| {\bf 1}_{\mathcal{F}}(x,y) \: d\mu(x) d\mu(y) \\
                &=: \Psi_1(f,g,h), 
            \end{align*} 
             since ${\bf 1}_{\mathcal{F}}(x,y) {\bf 1}_{\mathcal{H}}(x,y)  \leq {\bf 1}_{\mathcal{F}}(x,y)$ holds by
            our assumption on $f$ and $h.$ For simplicity, let $F$ denote the support set of $f.$ 
             Then
             \begin{align*}
                \Psi_1(f,g,h) = 2(a+b) \mu(A \cap F) \mu(B) + 2(a+b)\mu(B \cap F)  \mu(A).
             \end{align*}     
            Moreover, notice that
              \begin{equation}              
              \int_{G^c} {\bf 1}_{\mathcal{F}}(x,y) \: d\mu(x) = \begin{cases}
                                                                   \mu(G^c) & \mbox{ if } y \in F \\
                                                                   \mu(G^c \cap F) & \mbox{ if } y \not \in F.
                                                             \end{cases}
              \end{equation}
            Thus, from Lemma 3.1 and (3.3),
             \begin{align*}
               I_{G^c,G}(f,g, h) +  I_{G,G^c}&(f,g, h) \\
                                               &\leq 2 \int_{G}  |g(y)|  \left( \int_{G^c} {\bf 1}_{\mathcal{F}}(x,y) \: d\mu(x) \right) d\mu(y) \\
                                               &= 2a \mu(A \cap F) \mu(G^c)  + 2a \mu(A \setminus F) \mu(G^c \cap F) \\
                                                & \quad + 2b \mu(B \cap F) \mu(G^c) + 2b \mu(B \setminus F) \mu(G^c \cap F) =: \Psi_2(f,g,h).
             \end{align*}                                                                 
            Obviously, $a\mu(A) = b\mu(B)$ and $\mu(A) + \mu(G^c) + \mu(B) = 1.$ Hence, with a little computation, we get
             \begin{align*} 
                     \Psi_1(f,g,h) &+ \Psi_2(f,g,h) \\
                             &= 2a \mu(A \cap F) + 2b\mu(B \cap F) + 2\mu(G^c \cap F)(a\mu(A \setminus F) + b\mu(B \setminus F)).
             \end{align*}
             Furthermore, we have the following estimates of the previous sum.
             First, 
             \begin{align}
                 \Psi_1(f,g,h) + \Psi_2(f,g,h) \leq 2a \mu(A \cap F) + 2a\mu(B \cap F) + 2a\mu(G^c \cap F) = 2a \mu(F).
             \end{align}
             Secondly,
               \begin{align}
                \begin{split}
                 \Psi_1(f,g,h) + \Psi_2(f,g,h) &\leq 2a \mu(A \cap F) + 2b\mu(B \cap F) + 2(a\mu(A \setminus F) + b\mu(B \setminus F))  \\ 
                   &= 2a \mu(A) + 2b \mu(B).
                \end{split}   
             \end{align}
             And lastly, we claim
                 \begin{align}
                 \Psi_1(f,g,h) + \Psi_2(f,g,h) \leq 2a \mu(A) + 2b (\mu(F) - \mu(A)).
             \end{align}
             We can prove (3.6) if we show that 
             \begin{align*}
               2a \mu(A \cap F) + 2\mu(G^c \cap F)(a\mu(A \setminus F) &+ b\mu(B \setminus F)) \\
                                                 &\leq 2a \mu(A) + 2b (\mu(F \cap (A\cup G^c)) - \mu(A)).             
             \end{align*}
              Or equivalently, 
              \begin{align*}
                 2\mu(G^c \cap F)&(a\mu(A \setminus F) + b\mu(B \setminus F)) \leq  2(a-b)\mu(A \setminus F)  + 2b\mu(G^c \cap F),
              \end{align*}
              which readily holds, since $2(a-b)\mu(A \setminus F) \geq 2(a-b)\mu(G^c \cap F)\mu(A \setminus F)$ and 
              $2b\mu(G^c \cap F) \geq 2b\mu(G^c \cap F)(\mu(A \setminus F)+\mu(B \setminus F))$ as $A \setminus F$ and $B \setminus F$ are disjoints. Hence our claim 
              is established.
              
              From (3.4), (3.5) and (3.6), we conclude that
              \begin{align*}
                \Psi_1(f,g,h) +& \Psi_2(f,g,h) \\
                  &\leq 2a \mu(F) \wedge (2a \mu(A) + 2b \mu(B))  \wedge (2a \mu(A) + 2b (\mu(F) - \mu(A))) \\
                  &\leq 2a (\mu(F)  \wedge \mu(A)) + 2b (0 \vee (\mu(F) - \mu(A)))  \wedge \mu(B).
              \end{align*}
             However, a careful look upon the right-hand side shows that it equals the integral 
             \begin{align*}
              2\int_0^\infty f^*(y) g^*(y) h^*(y) \: dy &= 2a|A^* \cap F^*| + 2b| (F^* \setminus A^*)^* \cap B^*|.
             \end{align*}             
             Hence, the estimates of the applied decomposition (3.2) complete the proof in this case.
             
            \noindent {\it Case 2.} Lastly, suppose ${\bf 1}^*_f \geq {\bf 1}^*_h.$ 
              Now we clearly have
             $$ I_{G,G}(f,g,h) \leq 2 \int_{G \cap \{h \neq 0\}} \int_{G} |g(x)-g(y)|\: d\mu(x) d\mu(y).$$
             The remaining part of (3.2) can be estimated by Lemma 3.1, providing an upper bound which is symmetric in $f$ and $h.$
             Hence, if we interchange the role of $f$ and $h$ in $\Psi_1(f,g,h)$ and $\Psi_2(f,g,h),$ we can
             finish the proof by calculations previously done in Case 1. Thus we get the proof under the assumption (3.1).
              \smallskip      
                        
              {\it Step 2:} In the general case, we can decompose the simple function $g = \sum_{i=1}^m a_i {\bf 1}_{E_i} - \sum_{i=1}^n b_i {\bf 1}_{F_i}$ into 
              sums of functions of zero means used in Step 1 as (3.1). 
              Suppose that $a_m > a_{m-1} > \hdots > a_1 > 0 > b_1 > \hdots > b_n$ and the sets $E_i, F_i$ are pairwise disjoints.
              Then we need to section $g$ into pair of horizontal blocks such that each pair has zero mean.
              
              First, if 
               $$ a_1\mu(\cup_{i=1}^m E_i) \leq |b_1| \mu(\cup_{i=1}^n F_i),$$
              let $\tilde{a}_1 = a,$ $\tilde{b}_1 = a_1 \mu(\cup_{i=1}^m E_i)/\mu(\cup_{i=1}^n F_i).$ 
              Otherwise, let $\tilde{a}_1 = b_1 \mu(\cup_{i=1}^n F_i)/\mu(\cup_{i=1}^m E_i),$ $\tilde{b}_1 = b.$
              Set
                 $$ g_1 = \tilde{a}_1 {\bf 1}_{\bigcup_{i=1}^m E_i} - \tilde{b}_1 {\bf 1}_{\cup_{i=1}^n F_i}. $$   
              Clearly, $g_1$ has zero mean value over $\Omega$ and supp$(g) = \mbox{supp}(g_1) \cup E_1$ or supp$(g_1) \cup F_1.$ Next, let us repeat this construction with $g - g_1$ to get $\tilde{a}_2, \tilde{b}_2$ and the function $g_2$ and so on
              until we arrive at the zero function $\mu-$a.e. Hence, we get a  decomposition 
                             $$ g  = \sum_{i=1}^K g_i  $$
                and the supports 
              form a decreasing sequence $\mbox{supp } g_1 \supseteq \mbox{ supp } g_{2} \supseteq \hdots \supseteq \mbox{ supp } g_K .$ 
              Since the decreasing rearrangement of simple functions may be viewed as sliding the blocks
              in each horizontal layer to build a single larger block, it follows
              $$ g^*  = \sum_{i=1}^K g_i^*.$$
              Since the left-hand side of the desired inequality is linear in $g,$ the proof now follows
              in the general case as well. 
         \end{proof}
         
    \section{Applications}
    
    We can now present a Leibniz-type rule for mean oscillations of functions.
    The result first appeared in our earlier paper \cite{LMia} (see \cite[Theorem 2.6]{BeL}, \cite[Theorem 5.1]{LJmaa} in particular cases as well).
    However, we think that the next proof is transparent and considerably simpler, being a straightforward corollary of Theorem 3.2.
    
    \begin{thm}
    Let $(\Omega, \mathcal{F}, \mu)$ be a probability space. For any real $f,g \in L^\infty(\Omega, \mu),$ one has
    $$ \|fg - (fg)_\Omega \|_r \leq \|f\|_{p_1} \|g - g_\Omega \|_{q_1} + \|g\|_{p_2} \|f - f_\Omega \|_{q_2}, $$
    where $1 \leq r,p_1,p_2,q_1,q_2 \leq \infty$ and ${1 \over r} = {1 \over p_1} + {1 \over q_1} = {1 \over p_2} + {1 \over q_2}.$
    \end{thm}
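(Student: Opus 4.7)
The plan is to dualize and reduce everything to Theorem 3.2. For $1 \leq r \leq \infty$, I use the identity
$$ \|fg - (fg)_\Omega\|_r = \sup \Big\{ \Big| \int_\Omega (fg - (fg)_\Omega) h \, d\mu \Big| \colon \|h\|_{r'} \leq 1 \Big\}, $$
so it is enough to estimate this inner product for an arbitrary test function $h \in L^{r'}(\Omega,\mu)$. Because $\mu(\Omega) = 1$, a quick computation yields
$$ \int_\Omega (fg - (fg)_\Omega) h \, d\mu = \iint\limits_{\Omega \times \Omega} (fg(x) - fg(y)) h(x) \, d\mu(x) d\mu(y). $$

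The key algebraic step is the elementary Leibniz identity
$$ 2(fg(x) - fg(y)) = (f(x)+f(y))(g(x)-g(y)) + (g(x)+g(y))(f(x)-f(y)), $$
which splits the double integral into two symmetric pieces. Writing $\tilde g := g - g_\Omega$ and $\tilde f := f - f_\Omega$, I note that $g(x) - g(y) = \tilde g(x) - \tilde g(y)$ and similarly for $f$, so the middle factors can be replaced by their mean-zero versions without changing the integrals. I then swap $x \leftrightarrow y$ to bring $h$ into the $h(y)$-slot required by Theorem 3.2 (introducing a harmless sign change) and use its two-sided form, valid because $h \mapsto -h$ preserves $h^*$. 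This produces
$$ \Big| \int_\Omega (fg - (fg)_\Omega) h \, d\mu \Big| \leq \int_0^\infty f^* \, \tilde g^* \, h^* \, dy + \int_0^\infty g^* \, \tilde f^* \, h^* \, dy. $$

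The proof is finished by H\"older's inequality on $(0,\infty)$, combined with the fact that rearrangements preserve $L^p$-norms: the index relation $\frac{1}{p_1} + \frac{1}{q_1} + \frac{1}{r'} = 1$ gives $\int_0^\infty f^* \tilde g^* h^* \, dy \leq \|f\|_{p_1} \|\tilde g\|_{q_1} \|h\|_{r'}$, and analogously $\int_0^\infty g^* \tilde f^* h^* \, dy \leq \|g\|_{p_2} \|\tilde f\|_{q_2} \|h\|_{r'}$. Taking the supremum over $\|h\|_{r'} \leq 1$ delivers the desired bound. I do not foresee a genuine obstacle: the whole argument is a straightforward corollary of Theorem 3.2 once the Leibniz identity is brought in, and the only mild care concerns the boundary indices $1$ and $\infty$, handled uniformly by the standard $L^1$--$L^\infty$ duality on a probability space together with the convention $1/\infty = 0$.
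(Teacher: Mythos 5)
Your proposal is correct and follows essentially the same route as the paper: dualize against $h \in L^{r'}$, apply the Leibniz identity $2(fg(x)-fg(y)) = (f(x)+f(y))(g(x)-g(y)) + (g(x)+g(y))(f(x)-f(y))$ together with the mean-zero replacement $g(x)-g(y) = \tilde g(x) - \tilde g(y)$ so that Theorem 3.2 applies, and finish with H\"older on $(0,\infty)$ and the $L^p$-invariance of rearrangements. The only differences (taking a supremum with absolute values and a sign-symmetric $h$ rather than a norming functional, and a single three-factor H\"older step instead of two) are cosmetic.
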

   
    \begin{proof}
         First, we notice that the identity
          \begin{equation}
           f(x)g(x) - f(y)g(y) = {1 \over 2} (f(x)+f(y))(g(x)-g(y)) 
                                   + {1 \over 2} (f(x)-f(y))(g(x)+g(y))           
          \end{equation}
         holds for all $x, y \in \Omega.$
         
         From the duality of $L^p$-spaces, one can find a real $h \in L^{r'}(\Omega, \mu)$ such that $\|h\|_{r'} = 1$ and
         $$ \|fg - (fg)_\Omega\|_r = \int_\Omega \left(fg - \int_\Omega fg \: d\mu \right)h \: d\mu.$$
         Relying upon Theorem 3.2 and the identity (4.1), we get
          \begin{align*}
             \int_\Omega (fg - (fg)_\Omega)h \: d\mu &= \iint\limits_{\Omega \times \Omega} (f(x)g(x) - f(y)g(y))h(y) \: d\mu(x) \: d\mu(y) \\
                                                        &\leq  \int_0^\infty f^*(y) \left(g - \int_\Omega g \: d\mu \right)^*(y) h^*(y) \: dy \\
                                                        & \qquad +   \int_0^\infty g^*(y)  \left(f - \int_\Omega f \: d\mu \right)^*(y) h^*(y) \: dy, \\
          & \hspace{-4.2cm} \mbox{and since } h \mbox{ and } h^* \mbox{ are equimeasurable: }  \|h\|_{r'} = \|h^*\|_{r'} = 1, \mbox{ hence }    \\
            &\leq   \|f^* (g - g_\Omega)^* \|_{L^r[0,\infty)} +   \|g^* (f - f_\Omega)^* \|_{L^r[0,\infty)}. 
          \end{align*}
        Applying H\"older's inequality and using again that any function and its decreasing
        rearrangement have the same $L^p$ norms, the proof is complete.
    \end{proof}
    
    \begin{remark}
      The next reasoning provides a heuristic approach to the decomposition (4.1). Let $(\Omega, \mathcal{F}, \mu)$ be a probability measure space. We can define a first-order differential calculus over
      $L^\infty(\Omega, \mu).$ Any real $m \in L^\infty(\Omega, \mu)$ gives a left and right multiplication on the real product space
      $L^2(\Omega \times \Omega, \mu \otimes \mu) $ by
       $$ (mf)(x,y) = m(x)f(x,y) \mbox{ and } (fm)(x,y) = m(y)f(x,y).$$ 
       Let us consider the derivation $\partial \colon L^\infty(\Omega, \mu) \rightarrow L^2(\Omega, \mu) \otimes L^2(\Omega, \mu)$
       by
        $$ (\partial f)(x,y) = f(x) - f(y),$$
        which clearly satisfies the Leibniz rule 
        $$ \partial(fg) = f (\partial g) + (\partial f) g. $$ 
        The map $\partial$ has a natural linear extension to $L^2(\Omega).$ We recall that the integral formula for the variance
         $$ \|f - f_\Omega\|_2^2 = {1 \over 2} \iint_{\Omega \times \Omega} (f(x)-f(y))^2 \: d\mu(x) \: d\mu(y)$$ holds.
         Now it is simple to check that $\|\partial f \|_2^2 = 2\|f - f_\Omega\|_2^2$ and
         $$ -2(f - f_\Omega) = \partial^* \partial f$$ hold. Thus,
         $ -2(fg - (fg)_\Omega) =  \partial^* (f \partial g) + \partial^* ((\partial f) g).$
         Moreover,
         $$  -\partial^* (f \partial g)(y) =\int_\Omega (f(x)+f(y))(g(x) - g(y)) \: d\mu(x)$$
        and 
         $$  -\partial^* ((\partial f )g )(y) =  \int_\Omega  (g(x)+g(y))(f(x) - f(y)) \: d\mu(x),$$
        for almost every $y \in \Omega,$  which lead to the decomposition appeared in the proof of Theorem 4.1. 
    \end{remark}

    A very similar argument provides us with a Leibniz-type inequality in rearrangement invariant Banach function spaces.
       
     \begin{thm}
            Let $(\Omega, \mathcal{F}, \mu)$ be a probability space which is non-atomic or completely atomic, with
            all atoms having equal measure, 
            and suppose $X$ is a rearrangement invariant function space over it.
            If $f,g \in X$ are real-valued and bounded, then 
             $$ \|fg - (fg)_\Omega \|_X \leq \|f\|_{\infty} \|g - g_\Omega \|_X + \|g\|_{\infty} \|f - f_\Omega \|_X. $$
             Furthermore, if $f$ belongs to $X$ and $g$ belongs to the associate space $X',$ then
              $$ \|fg - (fg)_\Omega \|_1 \leq \|f\|_{X} \|g - g_\Omega \|_{X'} + \|g\|_{X'} \|f - f_\Omega \|_{X}. $$
    \end{thm}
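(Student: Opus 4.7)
The plan is to mirror the proof of Theorem 4.1, replacing the $L^p$--$L^{p'}$ duality by the K\"othe duality between $X$ and $X'$. For the first inequality, the Fatou property of $X$ gives $X = X''$ isometrically, so I can choose $h \in X'$ with $\|h\|_{X'} \leq 1$ such that
$$\|fg - (fg)_\Omega\|_X = \int_\Omega (fg - (fg)_\Omega) h \, d\mu.$$
Inserting the algebraic identity (4.1) and symmetrizing exactly as in the proof of Theorem 4.1, the pairing reduces to two double integrals of the form treated by Theorem 3.2, in which the mean-zero function is $g - g_\Omega$ in the first term and $f - f_\Omega$ in the second (the centering constants drop out because only differences $g(x)-g(y)$, respectively $f(x)-f(y)$, appear). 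Theorem 3.2 then bounds the two pieces by
$$\int_0^\infty f^*(y)(g - g_\Omega)^*(y) h^*(y)\,dy \quad\text{and}\quad \int_0^\infty g^*(y)(f - f_\Omega)^*(y) h^*(y)\,dy.$$
Since $f, g$ are bounded, $f^*(y) \leq f^*(0) = \|f\|_\infty$ and $g^*(y) \leq \|g\|_\infty$ pointwise; pulling these scalar factors out and invoking the Hardy--Littlewood half of (2.1) gives $\int_0^\infty (g - g_\Omega)^* h^*\,dy \leq \|g - g_\Omega\|_X \|h\|_{X'} \leq \|g - g_\Omega\|_X$, and similarly for the other term.

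The second inequality runs along the same lines, except that I now pair with $h \in L^\infty(\Omega,\mu)$, $\|h\|_\infty \leq 1$, via $L^1$--$L^\infty$ duality. After the same reduction via (4.1) and Theorem 3.2, the scalar to factor out is $h^*(y) \leq \|h\|_\infty \leq 1$, and the full H\"older chain in (2.1) applied to the pair $(X, X')$ delivers the bounds $\|f\|_X \|g - g_\Omega\|_{X'}$ and $\|g\|_{X'}\|f - f_\Omega\|_X$.

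The main obstacle is that Theorem 3.2 is stated for $L^\infty$ factors, whereas in the second assertion neither $f \in X$ nor $g \in X'$ need be bounded. My plan is to truncate: set $f_n := f\,\mathbf{1}_{|f|\leq n}$ and $g_n := g\,\mathbf{1}_{|g|\leq n}$, apply the bounded case to $f_n$ and $g_n - (g_n)_\Omega$, and pass to the limit $n\to\infty$. Dominated convergence on the double-integral side is supplied by the envelope $(|f(x)|+|f(y)|)(|g(x)|+|g(y)|)|h(y)|$, which integrates finitely via $\|fg\|_1 \leq \|f\|_X\|g\|_{X'}$ (H\"older) and $\|h\|_1 \leq \|h\|_\infty$. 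On the rearrangement side $|f_n|\uparrow |f|$ and $|g_n|\uparrow |g|$, so (2.0) yields $f_n^*\uparrow f^*$ and $g_n^*\uparrow g^*$, and monotone convergence closes the argument; the scalar correction $(g_n)_\Omega \to g_\Omega$ follows from $L^1$-convergence of the truncations.
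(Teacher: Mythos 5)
Your argument for the first inequality is essentially the paper's own: K\"othe duality via the Lorentz--Luxemburg theorem, the identity (4.1), Theorem 3.2, extraction of $f^*(0)=\|f\|_\infty$ and $g^*(0)=\|g\|_\infty$, and then the rearrangement description of the $X$-norm. Two cosmetic points: the supremum defining $\|\cdot\|_{X''}$ need not be attained, so you should either take the supremum over $\|h\|_{X'}\leq 1$ at the end (as the paper does) or work with an $\varepsilon$-almost-maximizer; and the inequality $\int_0^\infty (g-g_\Omega)^*h^*\,dy\leq \|g-g_\Omega\|_X\|h\|_{X'}$ is the \emph{second} inequality in (2.1) (valid because the measure space is resonant), not the Hardy--Littlewood half. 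Neither affects correctness.

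For the second inequality you rightly notice something the paper passes over in silence, namely that Theorem 3.2 requires bounded factors while $f\in X$ and $g\in X'$ need not be bounded. But your limit passage has a genuine gap: after recentering, the functions appearing on the rearrangement side are $(g_n-(g_n)_\Omega)^*$, not $g_n^*$, and subtracting the drifting constant $(g_n)_\Omega$ destroys the monotonicity $|g_n|\uparrow|g|$, so (2.0) and monotone convergence do not apply. Nor can you route around this through the norms: one only gets $\|g_n-(g_n)_\Omega\|_{X'}\leq \|g-g_\Omega\|_{X'}+\|g\,\mathbf{1}_{\{|g|>n\}}\|_{X'}+o(1)$, and the truncation tail $\|g\,\mathbf{1}_{\{|g|>n\}}\|_{X'}$ need not tend to zero when $X'$ lacks absolutely continuous norm (e.g.\ for $g(t)=\log(1/t)$ in a Marcinkiewicz space over $(0,1)$ the tail has norm bounded below). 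The repair is to approximate the mean-zero function $u=g-g_\Omega$ directly by bounded mean-zero functions $u_n$ with $u_n\to u$ a.e.\ and $|u_n|\uparrow|u|$ --- exactly the device already used at the start of the proof of Theorem 3.2 --- so that $u_n^*\uparrow u^*=(g-g_\Omega)^*$ by (2.0) and $\|u_n\|_{X'}\leq\|g-g_\Omega\|_{X'}$ by the ideal property, while your dominated-convergence envelope still controls the left-hand side since only the differences $u_n(x)-u_n(y)$ occur there. With that substitution (and the analogous one for $f-f_\Omega$) your argument closes.
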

    
    \begin{proof}
    We recall that the Lorentz--Luxemburg theorem \cite[Theorem 2.7]{BS} asserts that $X$ coincides with its second
    associate space $X''$ and $\|f\|_X = \|f\|_{X''},$ hence
    $$  \|f\|_X =  \sup \left\{ \int_\Omega fh \: d\mu \colon \|h\|_{X'} \leq 1 \right\}.$$
    Pick any $h \in X'$ such that $\|h\|_{X'} \leq 1.$ We get from Theorem 3.2 and (4.1)
     \begin{align*}
             \int_\Omega (fg - (fg)_\Omega)h \: d\mu &= \iint\limits_{\Omega \times \Omega} (f(x)g(x) - f(y)g(y))h(y) \: d\mu(x) \: d\mu(y) \\
                                                        &\leq f^*(0) \int_0^\infty \left(g - \int_\Omega g \: d\mu \right)^*(y) h^*(y) \: dy \\
                                                        & \qquad + g^*(0)  \int_0^\infty  \left(f - \int_\Omega f \: d\mu \right)^*(y) h^*(y) \: dy.
          \end{align*}
     Applying decreasing rearrangements to describe the norm $X,$ see \cite[Corollary 4.4]{BS}, we obtain
      \begin{align*}
             \int_\Omega (fg - (fg)_\Omega)h \: d\mu \leq \|f\|_{\infty} \|g - g_\Omega \|_X + \|g\|_{\infty} \|f - f_\Omega \|_X.
     \end{align*}             
     Taking the supremum of the left-hand side over $h$ such that $\|h\|_{X'} \leq 1,$ the proof is complete.
     
     The rest of the statement can be proved by a same argument and (2.1), therefore it is left to the reader.
     \end{proof}
    
     \begin{remark}
      The statement of the previous theorem remains valid in rearrangement invariant spaces over
      arbitrary probability measure space. In fact, one can extend Theorem 4.3 to this general case by the {\it method of retracts,} described in \cite[p. 54]{BS},
      which enables us to embed any probability space into a non-atomic one.
     \end{remark}
   
    \begin{remark}
       We note that any rearrangement invariant norm $X$ generates further rearrangement invariant norms via the expression
       $\||f|^p\|_X^{1/p},$ for $1 \leq p < \infty.$ Hence one may find a possible extension of Theorem 4.3 in the spirit 
       of Theorem 4.1. We left the proof of this direction to the interested reader. 
     \end{remark}

\end{document}